  \theoremstyle{plain}
  \newtheorem{thm}{Theorem}[section]
  \newtheorem{lem}[thm]{Lemma}
  \newtheorem{que}[thm]{Question}
  \newtheorem*{thmA}{Theorem A}
  \newtheorem*{thmB}{Theorem B}
  \newtheorem*{thmC}{Theorem C}
\newcommand{\Irr}{{\operatorname{Irr}}}
\newcommand{\SL}{{\operatorname{SL}}}
\newcommand{\Ker}{{\operatorname{Ker}}}
  \title{Conjugacy classes, characters and products of elements}
  \author{}
  \date{}
\begin{document}

  \maketitle

  \bigskip
  \centerline{by}
  \bigskip

\centerline{Robert M. Guralnick}
\centerline{Department of Mathematics}
\centerline{University of Southern California}
\centerline{Los Angeles,  CA 90089-2532 USA }
\centerline{E-mail: guralnic@usc.edu}
\smallskip
\centerline{and}

  \smallskip
  \centerline{Alexander Moret\'o} \centerline{Departament
  d'Matematiques} \centerline{Universitat de Val\`encia}
  \centerline{46100 Burjassot. Val\`encia SPAIN} \centerline{E-mail:
  Alexander.Moreto@uv.es}

\vskip 10pt

\vskip 10pt

{\bf Abstract.} Recently, Baumslag and Wiegold proved that a finite group $G$ is nilpotent if and only if $o(xy)=o(x)o(y)$ for every $x,y\in G$ of  coprime order. Motivated by this result, we study the groups with the property that $(xy)^G=x^Gy^G$ and those with the property that $\chi(xy)=\chi(x)\chi(y)$ for every $\chi\in\Irr(G)$ and every nontrivial $x, y \in G$ of pairwise coprime order. We also consider several ways of weakening the hypothesis on $x$ and $y$. While the result of Baumslag and Wiegold is completely elementary, some of our arguments here  depend on (parts of) the classification of finite simple groups.

  \vfill

  \noindent The research of the second author is  partially supported by Prometeo II/Generalitat Valenciana, MTM2016-76196-P and FEDER funds.
  \newpage

  \section{Introduction}

Recently,  B. Baumslag and J. Wiegold \cite{bw} obtained the following elementary characterization of finite nilpotent groups: a finite group $G$ is nilpotent if and only if $o(xy)=o(x)o(y)$ for every $x,y\in G$ with $(o(x),o(y))=1$. In \cite{ms}, the second author and A. S\'aez showed that it suffices to assume that $o(xy)=o(x)o(y)$ for $x, y\in G$ {\it of prime power order} with $(o(x),o(y))=1$. The goal of this note is to study some variations of this result by considering conjugacy classes or characters instead of element orders. In Section 2, we consider conjugacy classes. Our first result is elementary.

\begin{thmA}
Let $G$ be a finite group. Then $G$ is nilpotent if and only if $|(xy)^G|=|x^G||y^G|$ for every $x,y\in G$ of prime power order with $(o(x),o(y))=1$.
\end{thmA}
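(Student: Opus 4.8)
The plan is to prove the two implications separately and directly; both are elementary.

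For the forward implication, suppose $G$ is nilpotent, so $G=P_1\times\cdots\times P_k$ with $P_i\in\Syl_{p_i}(G)$. Let $x,y\in G$ have prime power order with $(o(x),o(y))=1$; discarding the trivial case, $x$ is a nontrivial $p$-element and $y$ a nontrivial $q$-element with $p\ne q$. Write $G=P\times Q\times L$, where $P$ and $Q$ are the Sylow $p$- and $q$-subgroups and $L$ is the product of the remaining Sylow subgroups; then $x\in P$, $y\in Q$, so $[x,y]=1$, and $C_G(x)=C_P(x)\times Q\times L$, $C_G(y)=P\times C_Q(y)\times L$. Since $x$ and $y$ commute and have coprime order, each is a power of $xy$, so $C_G(xy)=C_G(x)\cap C_G(y)=C_P(x)\times C_Q(y)\times L$. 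Passing to indices gives $|(xy)^G|=|P:C_P(x)|\,|Q:C_Q(y)|=|x^G|\,|y^G|$.

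For the reverse implication, recall that $G$ is nilpotent as soon as every $p$-element commutes with every $q$-element for all distinct primes $p,q$: then each Sylow subgroup is centralized, hence normalized, by every other Sylow subgroup, and since the Sylow subgroups generate $G$ they are all normal, so $G$ is nilpotent. Thus it suffices to fix distinct primes $p,q$, a $p$-element $x$ and a $q$-element $y$, and to deduce $[x,y]=1$ from $|(xy)^G|=|x^G|\,|y^G|$. Rewrite the hypothesis as $|C_G(x)|\,|C_G(y)|=|G|\,|C_G(xy)|$. On the other hand $|C_G(x)C_G(y)|\le|G|$, i.e.\ $|C_G(x)|\,|C_G(y)|\le|G|\,|C_G(x)\cap C_G(y)|$; combining the two yields $|C_G(xy)|\le|C_G(x)\cap C_G(y)|$. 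Since always $C_G(x)\cap C_G(y)\subseteq C_G(xy)$, we conclude $C_G(xy)=C_G(x)\cap C_G(y)$. (In fact the hypothesis is equivalent to the conjunction $C_G(x)C_G(y)=G$ and $C_G(xy)=C_G(x)\cap C_G(y)$, but only the latter is needed.) Now $xy$ centralizes itself, so $xy\in C_G(xy)\subseteq C_G(y)$; hence $(xy)y=y(xy)$, and cancelling $y$ gives $xy=yx$, as required.

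I do not anticipate a genuine obstacle: the whole point is the elementary observation that multiplicativity of the class sizes collapses the inclusion $C_G(x)\cap C_G(y)\subseteq C_G(xy)$ to an equality, whereupon commutativity of $x$ and $y$ is immediate. The only mild care needed is the structural reduction in the reverse direction (pairwise commuting of $p$- and $q$-elements across all primes forces nilpotency), which is standard.
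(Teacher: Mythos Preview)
Your proof is correct and follows essentially the same approach as the paper: the core of the reverse implication---showing that the class-size identity forces $C_G(xy)=C_G(x)\cap C_G(y)$ and hence $[x,y]=1$---is identical to the paper's argument, and your reduction ``pairwise commuting of $p$- and $q$-elements implies nilpotency'' is a standard variant of the paper's Lemma~\ref{com}. The paper omits the (easy) forward direction that you spell out.
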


In the next result, we weaken the condition $|(xy)^G|=|x^G||y^G|$.

\begin{thmB}
Let $G$ be a finite group and assume that $(xy)^G=x^Gy^G$ for every $x,y\in G$ of prime power order with $(o(x),o(y))=1$. Then $G$ is solvable. 
\end{thmB}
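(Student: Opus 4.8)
The plan is to argue by a minimal counterexample and reduce to a single non-abelian simple group, inside which the hypothesis can be violated by hand. First I would observe that the hypothesis is inherited by quotients: if $N\trianglelefteq G$ and $\bar x,\bar y\in G/N$ have coprime prime power orders $p^a$ and $q^b$, then, since a $p$-element of $G/N$ always lifts to a $p$-element of $G$, we may pick preimages $x,y\in G$ of $p$-power and $q$-power order respectively; applying the hypothesis gives $(xy)^G=x^Gy^G$, and projecting to $G/N$ yields $(\bar x\bar y)^{G/N}=\bar x^{G/N}\bar y^{G/N}$. Now let $G$ be a counterexample of least order. Every proper quotient of $G$ satisfies the hypothesis and, being smaller, is solvable; hence if some minimal normal subgroup $N$ of $G$ were solvable, then $G/N$ and therefore $G$ would be solvable. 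So every minimal normal subgroup of $G$ is non-solvable; fix one, $N=S_1\times\cdots\times S_k$ with each $S_i\cong S$ a fixed non-abelian finite simple group. We will use only that $S_1\le G$.

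The contradiction is produced by the following fact, which is where the classification enters: \emph{for every non-abelian finite simple group $S$ there exist $a,b\in S$ of prime power order with $(o(a),o(b))=1$ such that the set $\{a^sb : s\in S\}$ contains elements of at least two distinct orders.} Granting this, choose such $a,b$ inside $S_1$ and set $x=a$, $y=b$, regarded as elements of $G$. They have coprime prime power orders, so $(xy)^G=x^Gy^G$. But $x^Gy^G\supseteq\{x^sy : s\in S_1\}=\{a^sb : s\in S_1\}$, which by the quoted fact contains two elements of different orders, whereas every element of the single conjugacy class $(xy)^G$ has the same order. This is absurd, so no counterexample exists and $G$ is solvable.

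The real work, and the main obstacle, is the quoted fact about simple groups. I would verify it family by family. For $S=A_n$ with $n\ge 5$, take $a$ a product of two disjoint transpositions and $b$ a $3$-cycle: already in $A_5$, suitable conjugates give products of order $3$ and of order $5$, and the same small-support choices work for all $n$. For groups of Lie type one expects to pair a unipotent element with a semisimple element of coprime order, or to pass to a subgroup such as $\mathrm{PSL}_2(q)$ where the relevant class-multiplication constants are explicit; the sporadic groups are a finite check. A useful organizing principle is that if $a^Sb^S$ equals the single class $(ab)^S$, then comparing class sums via central characters forces $\chi(a)\chi(b)=\chi(1)\chi(ab)$ for every $\chi\in\Irr(S)$, an extremely restrictive condition that is typically easy to contradict once $a$ and $b$ generate $S$ — and every non-abelian finite simple group admits a generating pair of elements of coprime prime power orders. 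Carrying this out uniformly across all $S$ is the crux of the argument.
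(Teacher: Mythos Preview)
Your reduction to a minimal counterexample and the observation that the hypothesis passes to quotients are correct and match the paper. From the point where $N=S_1\times\cdots\times S_k$ is a non-abelian minimal normal subgroup, however, the paper proceeds quite differently.

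The paper derives Theorem~B as the case $p=2$ of a $p$-solvability statement (Theorem~\ref{st}). Its endgame does \emph{not} look for two products $a^sb$ of different orders. Instead it invokes the Guralnick--Tiep theorem (Theorem~\ref{gt}): in $S_1$ there exist nontrivial elements $x,y,z$ of three pairwise distinct prime orders with $xyz=1$. It then uses the character reformulation $\chi(1)\chi(uv)=\chi(u)\chi(v)$ of the class hypothesis (Lemma~\ref{ch}) twice, for the pairs $(x,y)$ and $(x^{-1},z^{-1})$, to deduce that whenever $\chi(y)\neq 0$ one has $|\chi(x)|=\chi(1)$, i.e.\ $x\in Z(\chi)$. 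Since the normal closure of $x$ is $N$ and $N$ is perfect, this forces $N\le\Ker\chi$; the second orthogonality relation then gives $|C_G(y)|=|G/N|$, whence $y^G=N$, which is absurd because $1\in N$.

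So the trade-off is this: your route is conceptually cleaner at the end (a single order comparison), but it rests on a statement about simple groups (``$a^Sb$ meets two element orders for some coprime prime-power $a,b$'') that you would have to establish family by family; your sketch for Lie type and sporadics is not yet a proof. The paper instead outsources the simple-group input to an already published theorem and pays for it with a short character-theoretic argument inside $G$. Incidentally, your ``organizing principle'' $\chi(a)\chi(b)=\chi(1)\chi(ab)$ is exactly the paper's Lemma~\ref{ch}; the paper exploits it in $G$, not in $S$, which is what lets it close the argument without ever comparing orders. A small sharpening of your approach: you do not actually need two different \emph{orders} in $a^{S_1}b$; two elements that are not $\Aut(S_1)$-conjugate suffice, since nontrivial elements of $S_1$ that are $G$-conjugate are already $N_G(S_1)$-conjugate.
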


Theorem B is a consequence of a recent result of the first author and P. H. Tiep that relies on J. Thompson's classification of minimal simple groups. Unfortunately, we have not been able to classify the (solvable) groups that satisfy the hypothesis of Theorem B.  

A related condition was studied by E. Dade and M. Yadav in \cite{dy}. In that article, they classified the groups that satisfy that $x^Gy^G=(xy)^G$ for every $x,y\in G$ with $x^G\neq(y^{-1})^G$. It turns out that such groups do not need to be nilpotent either. However, there are not many nonnilpotent groups satisfying the property studied by Dade and Yadav. They showed that the Frobenius   group $Q_8(C_3\times C_3)$ and $F^+\rtimes F^x$, where $F$ is a finite field with $|F|>2$ are the only nonnilpotent groups with this property.

In Section 3, we classify the finite groups that satisfy a similar condition for characters. This result does not depend on any part of the classification of finite simple groups.

\begin{thmC}
Let $G$ be a finite group. The property $\chi(xy)=\chi(x)\chi(y)$ for every nonidentity elements $x,y\in G$ of prime power order with $(o(x),o(y))=1$ and every $\chi\in\Irr(G)$ holds if and only if either $G$ is a prime power order group, an abelian group or $G$ has the following structure:
\begin{enumerate}
\item
$G=AP$ where $A>1$ is a cyclic $p'$-group and $P$ is a normal Sylow $p$-subgroup for some prime $p$.
\item
If $C=C_P(A)$ and $D=[A,P]$, then $P=CD$ with $C$ abelian, $D\triangleleft G$ and $C\cap D=1$. 
\item
The action of $A$ on $D$ is  Frobenius.
\end{enumerate}
\end{thmC}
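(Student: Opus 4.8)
The plan is to prove the two directions separately; the ``if'' direction is a short Clifford-theory computation, while the ``only if'' direction rests almost entirely on one identity coming from a class-sum trick.

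\emph{The ``if'' direction.} In a $p$-group no two nontrivial elements have coprime orders, and in an abelian group every irreducible character is linear, so the first two cases are vacuous. Suppose $G$ has the structure (1)--(3). I would first note that (1)--(3) force $C\leq Z(G)$, so $G\cong C\times(D\rtimes A)$ with $C$ abelian and $D\rtimes A$ a Frobenius group with kernel the $p$-group $D$ and cyclic complement $A$; hence $\Irr(G)$ consists of the characters $\mu\boxtimes\psi$ with $\mu\in\Irr(C)$ (linear) and $\psi\in\Irr(D\rtimes A)$, and since $D\rtimes A$ is Frobenius every nonlinear $\psi$ is induced from $D$ and vanishes off $D$. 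Thus every nonlinear $\chi\in\Irr(G)$ vanishes off $P=C\times D$. Given nontrivial $x,y$ of coprime prime-power orders, the identity is automatic for linear $\chi$, so assume $\chi$ nonlinear. Since the primes dividing $|G|$ are $p$ and the primes dividing $|A|$, a short check of images in $G/P\cong A$ shows $xy\notin P$ in every case, while at least one of $x,y$ is a nontrivial $p'$-element and hence also lies off $P$; so both sides of $\chi(xy)=\chi(x)\chi(y)$ are $0$.

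\emph{The ``only if'' direction: the key identity.} Assume $G$ satisfies the identity and is neither a $p$-group nor abelian. The crucial step is: for every nonlinear $\chi\in\Irr(G)$, every nontrivial $p$-element $x$ and every nontrivial $q$-element $y$ with $p\neq q$, one has $\chi(x)\chi(y)=0$. To prove it, fix $x,y$, let $\widehat y=\sum_{g\in y^G}g$, and let $\rho$ afford $\chi$. On one hand $\sum_{g\in y^G}\chi(xg)=\sum_{g\in y^G}\chi(x)\chi(g)=|y^G|\,\chi(x)\chi(y)$ by the hypothesis (each pair $x,g$ has coprime prime-power orders). On the other hand $\sum_{g\in y^G}\chi(xg)=\tr\bigl(\rho(x)\rho(\widehat y)\bigr)=|y^G|\,\chi(x)\chi(y)/\chi(1)$ by Schur's lemma applied to the central element $\widehat y$. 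Comparing gives $\chi(x)\chi(y)(\chi(1)-1)=0$, so $\chi(x)\chi(y)=0$. Consequently each nonlinear $\chi$ is supported, among elements of prime-power order, on the $\ell$-elements for a single prime $p(\chi)$, and restricting $\chi$ to a Sylow $\ell$-subgroup with $\ell\neq p(\chi)$ yields $|G|_\ell\mid\chi(1)$. If two nonlinear characters had $p(\chi_1)=q_1\neq q_2=p(\chi_2)$ this would force $\chi_i(1)\geq |G|_{q_i'}$ and hence $\chi_1(1)^2+\chi_2(1)^2\geq 2|G|>\sum_{\chi\text{ nonlinear}}\chi(1)^2$, a contradiction; the same bound rules out a nonlinear $\chi$ vanishing on all nontrivial elements of prime-power order. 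So there is a single prime $p$ with $p(\chi)=p$ for all nonlinear $\chi$.

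\emph{The ``only if'' direction: recovering the structure.} For $q\neq p$ and a nontrivial $q$-element $y$, all nonlinear characters vanish at $y$, so $|C_G(y)|=\sum_\chi|\chi(y)|^2=|G:G'|$; choosing $y$ central in a Sylow $q$-subgroup gives $q\nmid|G'|$, so $G'$ is a $p$-group and a Sylow $p$-subgroup $P$ is normal. Let $A$ be a Hall $p'$-complement (Schur--Zassenhaus); then $A\cong G/P$ is abelian. With $D=[P,A]$ one has $G/D\cong(P/D)\times A$, and since a nonabelian group of the form ``($p$-group)$\times$($p'$-group)'' violates the key step above, $G/D$ must be abelian, so $G'=D$. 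For a nontrivial prime-power-order $a\in A$ the same centralizer computation gives $|C_G(a)|=|G:G'|$, hence $|C_P(a)|=|P|/|D|=|P/D|$; as $C:=C_P(A)\leq C_P(a)$ and $|C|=|P/D|\cdot|C\cap D|$, this forces $C\cap D=1$ and $C_P(a)=C$, so every nontrivial prime-power-order element of $A$ — and therefore, since these generate $A$ and generate cyclic subgroups, every nontrivial element of $A$ — acts fixed-point-freely on $D$. Thus $D\rtimes A$ is Frobenius, giving (3); $C\cap G'=C\cap D=1$ embeds $C$ in the abelian group $G/G'$, so $C$ is abelian, which together with $P=CD$ from coprime action gives (2); and $A$, an abelian Frobenius complement on the nontrivial group $D=G'$, is cyclic, giving (1).

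I expect the main obstacle to be organizing the ``only if'' argument: the counting step that simultaneously pins down the prime $p$ and produces the inequality $\chi_1(1)^2+\chi_2(1)^2\geq 2|G|$, and then threading the centralizer-order identities so that they yield exactly $C\cap D=1$ and the fixed-point-free action. The class-sum identity itself is short once one thinks of evaluating $\sum_{g\in y^G}\chi(xg)$ in two ways.
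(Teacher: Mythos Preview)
Your ``only if'' direction is correct and is organised somewhat differently from the paper. The paper first derives the single identity $(xy)^G=xyG'$ (equivalently $|C_G(xy)|=|G:G'|$) from the hypothesis, and then pins down the prime by observing that if $G'$ contained a nontrivial $p$-element $x'$ and a nontrivial $q$-element $y'$ with $q\neq p$, then $(x'y')^G=x'y'G'=G'\ni 1$, a contradiction; so $G'$ is a $p$-group for a single prime $p$. Your route via the class-sum trick, the assignment $\chi\mapsto p(\chi)$, and the degree inequality $\chi_1(1)^2+\chi_2(1)^2\geq 2|G|$ is a legitimate alternative and reaches the same conclusion that $G'$ is a $p$-group; the remaining structural steps (Schur--Zassenhaus, coprime action, the centralizer computations giving $C\cap D=1$ and $C_D(a)=1$, and ``abelian Frobenius complement $\Rightarrow$ cyclic'') match the paper's. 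One small imprecision: when you argue that $G/D$ is abelian, you should appeal to the conclusion ``$p(\chi)=p$ for every nonlinear $\chi$'' rather than to the key step directly, since a nonlinear $\psi\boxtimes\mu\in\Irr((P/D)\times A)$ could in principle vanish on all nontrivial $p$-elements; what kills it is that it is nonzero on nontrivial elements of $A$.

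There is, however, a genuine error in your ``if'' direction. You assert that (1)--(3) force $C\leq Z(G)$ and hence $G\cong C\times(D\rtimes A)$ with $P=C\times D$, but this is false. The paper itself warns against exactly this and gives a counterexample: inside $\SL(2,3)\ltimes(C_3\times C_3)$ take $z\in Z(Q_8)$ and $g\in\SL(2,3)$ of order~$3$, and set $G=\langle gz\rangle\ltimes(C_3\times C_3)$. Here $p=3$, $A=\langle z\rangle$, $D=C_3\times C_3$, $P=\langle g\rangle\ltimes D$, $C=C_P(A)=\langle g\rangle$, and all of (1)--(3) hold; but $g$ acts nontrivially on $D$, so $C$ is not central (indeed not even normal), and $P$ is nonabelian of order~$27$, so $P\neq C\times D$ as groups. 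What does survive is only the weaker fact that $G/D$ is abelian (since $A$ and $C$ are abelian and commute, and $G=ACD$ with $AC\cap D=1$). From this the paper's Clifford-theory argument goes through: every nonlinear $\chi$ has $D\not\leq\Ker\chi$, hence lies over some nonprincipal $\alpha\in\Irr(D)$; the Frobenius action of $A$ on $D$ forces $I_G(\alpha)\leq P$; so $\chi$ is induced from a subgroup of $P$ and vanishes on $G\setminus P$. Once you replace your direct-product decomposition by this argument, your final paragraph checking that $xy\notin P$ and that at least one of $x,y$ lies off $P$ is correct as written.
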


One could think that, perhaps, the groups described in (i)-(iii) satisfy that $C$ is normal in $G$ (and hence $G$ can be written as the direct product of an abelian group and a Frobenius group). However, this is not the case. It is well-known that inside the semidirect product $\SL(2,3)(C_3\times C_3)$ the action of $Q_8\leq \SL(2,3)$ on $C_3\times C_3$ is Frobenius. In particular, if we take $z\in Z(Q_8)$ and $g\in\SL(2,3)$ of order $3$, then the semidirect product $G=\langle gz\rangle(C_3\times C_3)$ has the structure described in (i)-(iii) with $C=\langle g\rangle$ not normal in $G$. 

We also consider what happens if we fix a prime  $p$ and we just assume that the class condition of Theorems A or   B or the character condition of Theorem C holds when $x$ is a $p$-element and $y$ is a $p'$-element of prime power order. We obtain similar results, but the strong form of Theorem B relies on the classification of finite simple groups.  We conclude in Section 4 with some open questions motivated by this work.

\section{Conjugacy classes}

We begin with the elementary proof of (a strong form) of Theorem A. It is well-known that a finite group is nilpotent if and only if every two elements of coprime order commute (see Theorem 2.12 of Chapter 4 of \cite{suz}). We need the following slight generalization of this result.

\begin{lem}
\label{com}
Let $p$ be a prime number and $G$ a finite group. A Sylow $p$-subgroup of $G$ is a direct factor of $G$ if and only if for every $p$-element $x$ and every $p'$-element of prime power order $y$, $x$ and $y$ commute. In particular, $G$ is nilpotent if and only if for every prime power order elements $x,y\in G$ with $(o(x),o(y))=1$, $x$ and $y$ commute.
\end{lem}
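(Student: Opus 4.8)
The ``only if'' direction is routine and I would dispatch it first: if $G=P\times L$ with $P$ a Sylow $p$-subgroup, then $|L|=|G|_{p'}$, so comparing orders shows every $p$-element of $G$ lies in $P$ and every $p'$-element lies in $L$; in particular any $p$-element and any $p'$-element commute. The substance is the ``if'' direction, so suppose from now on that every $p$-element of $G$ commutes with every $p'$-element of prime power order, and fix $P\in\Syl_p(G)$.

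The plan is to show $G=P\cdot C_G(P)$, and then to analyze $C:=C_G(P)$. For the first point: a $p'$-element of prime power order commutes, by hypothesis, with every element of $P$ (these being $p$-elements), hence lies in $C$; writing an arbitrary $p'$-element of $G$ as the product of its commuting prime-power parts, every $p'$-element of $G$ lies in $C$. Consequently every Sylow $q$-subgroup with $q\ne p$ is contained in $C$. Since $C\trianglelefteq N_G(P)$ and $P\le N_G(P)$, the set $PC$ is a subgroup; it contains $P$ together with a full set of Sylow $q$-subgroups for $q\ne p$, so $G=PC$ because $G$ is generated by its Sylow subgroups. Writing $g=pc$ with $p\in P$, $c\in C$, we get $gPg^{-1}=pPp^{-1}=P$, so $P\trianglelefteq G$.

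Next I would pin down the structure of $C$. Any $p$-subgroup of $C$ centralizes $P$, so its product with $P$ is a $p$-group containing the Sylow $p$-subgroup $P$, hence equals $P$; therefore $Z(P)=C\cap P$ is a Sylow $p$-subgroup of $C$ and it is central in $C$. By Burnside's normal $p$-complement theorem (or by Schur--Zassenhaus, since $Z(P)\trianglelefteq C$), $C=Z(P)\times L$ where $L$ is the characteristic normal $p$-complement of $C$; as $C\trianglelefteq G$, also $L\trianglelefteq G$. Now $G=PC=P\,Z(P)\,L=PL$ with $L$ a $p'$-group, so $P\cap L=1$, and since $P$ and $L$ are both normal in $G$ we conclude $G=P\times L$, i.e.\ $P$ is a direct factor of $G$. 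Finally, for the ``in particular'': if all prime-power-order coprime pairs commute, then for each prime $p$ the hypothesis of the first part holds (a $p$-element and a $p'$-element of prime power order have coprime orders), so every Sylow subgroup of $G$ is a direct factor, hence normal; a finite group all of whose Sylow subgroups are normal is the internal direct product of them, hence nilpotent. The converse is the classical fact cited from Theorem 2.12, Chapter 4 of \cite{suz}.

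I expect the only genuinely non-formal step to be the extraction of an honest direct-factor complement: normality of $P$ drops out of the Sylow-generation argument, but passing from ``$P$ is normal'' to ``$P$ is a direct factor'' needs a transfer/splitting input, and the right way to set it up is to first observe that $C_G(P)$ has a \emph{central} Sylow $p$-subgroup. Everything else is bookkeeping with prime-power parts of elements.
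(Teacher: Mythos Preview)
Your argument is correct but proceeds quite differently from the paper's. The paper argues by induction on $|G|$: it picks $1\neq z\in Z(P)$, notes that the hypothesis forces $C_G(z)$ to contain a Sylow $r$-subgroup for every prime $r$ (hence $z\in Z(G)$), applies the inductive hypothesis to $G/\langle z\rangle$, and then lifts the direct-factor decomposition back to $G$. Your route is direct rather than inductive: you show that every $p'$-element already lies in $C=C_G(P)$, conclude $G=PC$ and $P\trianglelefteq G$, and then split $C=Z(P)\times L$ using that its Sylow $p$-subgroup $Z(P)$ is central. Both approaches ultimately rely on the same elementary fact---a group whose Sylow $p$-subgroup is central has a normal $p$-complement (the subgroup of $p'$-elements)---so your appeal to Burnside or Schur--Zassenhaus is really no heavier than the paper's unspoken ``whence so does $G$'' at the end of the induction. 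The paper's version is a bit shorter and matches its framing of the lemma as a ``slight generalization'' of the classical criterion; yours has the virtue of producing the complement $L$ explicitly in one pass without an inductive hypothesis.
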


\begin{proof}
Assume that every $p$-element $x$ commutes with every $p'$-element of prime power order $y$. We argue by induction on $|G|$. We may assume that $p$ divides $|G|$. Let $1\neq z\in Z(P)$, where $P$ is a Sylow $p$-subgroup of $G$. By hypothesis, $C_G(z)$ contains a Sylow $r$-subgroup of $G$ for every prime $r$, so $z\in Z(G)$ so $z\in Z(G)$. By induction $G/\langle z\rangle$ has a Sylow $p$-subgroup as a direct factor whence so does $G$. 
\end{proof}


Actually, a weaker sufficient condition for nilpotence is known: $G$ is nilpotent if and only if for every pair of primes $p$ and $q$ and every pair of elements $x,y \in G$ with $x$ a $p$-element and $y$ a $q$-element, $x$ commutes with some conjugate of $y$ (see Corollary E of \cite{dghp}), but the proof of this result depends on the classification of finite simple groups


Now, we are ready to prove our strong form of Theorem A.

\begin{thm}
\label{a}
Let $p$ be a prime number and $G$  a finite group. Then $G$ has a Sylow $p$-subgroup as a direct factor if and only if $|(xy)^G|=|x^G||y^G|$ for every $p$-element $x\in G$ and every $p'$-element of prime power order $y\in G$. In particular, 
 $G$ is nilpotent if and only if $|(xy)^G|=|x^G||y^G|$ for every $x,y\in G$ of prime power order with $(o(x),o(y))=1$.
\end{thm}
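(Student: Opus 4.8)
The plan is a short elementary argument resting on one observation about centralizers. For any $x,y\in G$, an element centralizing both $x$ and $y$ also centralizes $xy$, so $C_G(x)\cap C_G(y)\le C_G(xy)$; and for any subgroups $A,B\le G$ one has $[G:A\cap B]\le[G:A][G:B]$, since $|AB|=|A||B|/|A\cap B|\le|G|$, with equality precisely when $AB=G$. Chaining these, $|(xy)^G|=[G:C_G(xy)]\le[G:C_G(x)\cap C_G(y)]\le[G:C_G(x)][G:C_G(y)]=|x^G||y^G|$ for \emph{every} pair $x,y\in G$, and the equality $|(xy)^G|=|x^G||y^G|$ holds if and only if \emph{both} $C_G(xy)=C_G(x)\cap C_G(y)$ and $C_G(x)C_G(y)=G$. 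This equivalence is the crux; once it is in hand, both directions are short, and I foresee no real obstacle.

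For ($\Leftarrow$), write $G=P\times K$ with $P\in\Syl_p(G)$, so that $K=O_{p'}(G)$ and every $p'$-element of $G$ lies in $K$. A $p$-element $x$ then lies in $P$ and a $p'$-element $y$ lies in $K$, so $C_G(x)=C_P(x)\times K$, $C_G(y)=P\times C_K(y)$, $C_G(xy)=C_P(x)\times C_K(y)=C_G(x)\cap C_G(y)$, and $C_G(x)C_G(y)\supseteq KP=G$; the criterion above then yields $|(xy)^G|=|x^G||y^G|$ (and $y$ need not have prime power order for this direction).

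For ($\Rightarrow$), assume $|(xy)^G|=|x^G||y^G|$ for every $p$-element $x$ and every $p'$-element of prime power order $y$. By the criterion, $C_G(xy)=C_G(x)\cap C_G(y)$ for every such pair; since $xy\in C_G(xy)$ trivially, we get $xy\in C_G(x)$, i.e.\ $x$ commutes with $xy$, which is equivalent to $[x,y]=1$. Hence every $p$-element commutes with every $p'$-element of prime power order, and Lemma~\ref{com} gives that $G$ has a Sylow $p$-subgroup as a direct factor. The final ``in particular'' statement then follows by applying this equivalence at every prime $p$, together with the last sentence of Lemma~\ref{com}.

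Finally, it is worth recording why the class-size condition is so much more tractable than the set-equality condition $(xy)^G=x^Gy^G$ of Theorem~B. The class-size equality forces $C_G(xy)=C_G(x)\cap C_G(y)$, and therefore---essentially for free, because $xy$ centralizes itself---commutativity of $x$ and $y$; by contrast $(xy)^G=x^Gy^G$ alone does not place $xy$ inside $C_G(x)\cap C_G(y)$, which is precisely why Theorem~B must invoke Thompson's classification of minimal simple groups.
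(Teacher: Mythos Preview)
Your proof is correct and follows essentially the same route as the paper: from $|(xy)^G|=|x^G||y^G|$ you deduce $C_G(xy)=C_G(x)\cap C_G(y)$ via the index inequality, then use $xy\in C_G(xy)$ to force $[x,y]=1$, and finish with Lemma~\ref{com}. The paper omits the easy direction and your extra observation that equality also forces $C_G(x)C_G(y)=G$, but the heart of the argument is identical.
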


\begin{proof}
Assume that $|(xy)^G|=|x^G||y^G|$ for every $p$-element $x\in G$ and every $p'$-element of prime power order $y\in G$.  By the previous lemma, it suffices to see that $x$ and $y$ commute. We have that 
$$
|G:C_G(xy)|=|G:C_G(x)||G:C_G(y)|.
$$
Therefore, 
$$
|G:C_G(x)\cap C_G(y)|\leq|G:C_G(x)||G:C_G(y)|=|G:C_G(xy)|.
$$ Since $C_G(x)\cap C_G(y)\leq C_G(xy)$, we deduce that $C_G(xy)=C_G(x)\cap C_G(y)$. It follows that $xy\in C_G(xy)\leq C_G(y)$, so $xy$ commutes with $y$.  We deduce that $x$ and $y$ commute.
\end{proof}

Next, we notice that the hypothesis in Theorem B is equivalent to a character-theoretic condition.

\begin{lem}
\label{ch}
Let $G$ be a finite group and let $x,y\in G$. Then $(xy)^G=x^Gy^G$ 
  if and only if $\chi(1)\chi(x^ay^b)=\chi(x)\chi(y)$ for every $a, b\in G$ and  $\chi\in\Irr(G)$
\end{lem}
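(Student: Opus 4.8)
The plan is to prove the two implications separately; the backward direction is essentially immediate, while the forward direction rests on the standard class-function averaging identity coming from Schur's lemma.

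Consider first the direction $(\Leftarrow)$. Suppose $\chi(1)\chi(x^ay^b)=\chi(x)\chi(y)$ for all $a,b\in G$ and all $\chi\in\Irr(G)$. Taking $a=b=1$ gives $\chi(1)\chi(xy)=\chi(x)\chi(y)$, and therefore $\chi(x^ay^b)=\chi(xy)$ for every $\chi\in\Irr(G)$ and all $a,b\in G$. Since the columns of the character table of $G$ are linearly independent (second orthogonality relations), two elements on which every irreducible character takes the same value are conjugate; hence $x^ay^b\in(xy)^G$ for all $a,b$, i.e.\ $x^Gy^G\subseteq(xy)^G$. The reverse inclusion is automatic: the set $x^Gy^G$ is conjugation-invariant, because $(x^ay^b)^g=x^{ag}y^{bg}$, hence it is a union of conjugacy classes, and it contains $xy=x^1y^1$, so $(xy)^G\subseteq x^Gy^G$. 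Thus $(xy)^G=x^Gy^G$.

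For the direction $(\Rightarrow)$, assume $(xy)^G=x^Gy^G$. Then $x^ay^b\in(xy)^G$ for all $a,b$, so $\chi(x^ay^b)=\chi(xy)$ for every $\chi\in\Irr(G)$ and all $a,b$; consequently it suffices to prove $\chi(1)\chi(xy)=\chi(x)\chi(y)$ for each $\chi\in\Irr(G)$. I would obtain this by evaluating $S:=\sum_{a,b\in G}\chi(x^ay^b)$ in two ways. On the one hand every summand equals $\chi(xy)$, so $S=|G|^2\chi(xy)$. On the other hand, fix a representation $\rho$ affording $\chi$; the operator $\sum_{a\in G}\rho(a)^{-1}\rho(x)\rho(a)$ commutes with $\rho(g)$ for every $g\in G$ (reindex $a\mapsto ag$), hence by Schur's lemma it is a scalar, and taking $\tr$ shows that scalar is $|G|\chi(x)/\chi(1)$. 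Multiplying this operator on the right by $\rho(y^b)$ and taking $\tr$ gives $\sum_{a\in G}\chi(x^ay^b)=|G|\chi(x)\chi(y^b)/\chi(1)=|G|\chi(x)\chi(y)/\chi(1)$; summing over $b\in G$ yields $S=|G|^2\chi(x)\chi(y)/\chi(1)$. Comparing the two values of $S$ gives $\chi(1)\chi(xy)=\chi(x)\chi(y)$, hence $\chi(1)\chi(x^ay^b)=\chi(x)\chi(y)$ for all $a,b\in G$, as desired.

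There is no real obstacle in this argument; the only point that needs a little care is getting the normalization constant right in the Schur averaging step. An equivalent route, if one prefers to avoid representation matrices, is to work with class sums: from $(xy)^G=x^Gy^G$ one gets $\widehat{x^G}\,\widehat{y^G}=a\,\widehat{(xy)^G}$ in $Z(\mathbb{C}[G])$ for some nonnegative integer $a$, apply the central character $\omega_\chi$ to both sides, read off $a=|x^G|\,|y^G|/|(xy)^G|$ from the trivial character, and substitute back. Either way the proof is elementary and uses no part of the classification.
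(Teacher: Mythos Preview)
Your proof is correct and matches the paper's approach; the only difference is that for the forward direction the paper simply cites \cite[Lemma~2.2]{gmt}, whereas you supply the standard self-contained argument (Schur averaging or, equivalently, the class-sum computation with $\omega_\chi$) that underlies that citation. The backward direction is argued identically in both.
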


\begin{proof}
The ``only if" part follows from Lemma 2.2 of \cite{gmt}. In order to obtain the `if" part we take $z=x^ay^b\in x^Gy^G$ and we want to see that $z\in(xy)^G$. 
We have that 
$$
\chi(1)\chi(z)=\chi(x^a)\chi(y^b)=\chi(x)\chi(y)=\chi(1)\chi(xy)
$$
for any $\chi\in\Irr(G)$ and we deduce that $z\in(xy)^G$.
\end{proof}



The next result depends on the classification of finite simple groups.

\begin{thm}
\label{gt}
Let $G$ be a finite group and let $p$ be a prime number. Then $G$ is $p$-solvable if and only if for every primes $q\neq r$ different from $p$ and every $x\in G$ nontrivial $p$-element, $y\in G$ nontrivial $q$-element, $z\in G$ nontrivial $r$-element, $xyz\neq 1$. 
\end{thm}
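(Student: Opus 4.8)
The plan is to prove the two implications separately; the implication ``$p$-solvable $\Rightarrow$ no such triple'' is elementary, and the converse is where the classification is used.

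For the elementary direction I would first prove, by induction on $|H|$, that if $H$ is $p$-solvable, $y\in H$ is a $q$-element and $z\in H$ is an $r$-element with $q,r$ distinct primes different from $p$, then $w:=yz$ is not a nontrivial $p$-element. Suppose otherwise; we may assume $H=\langle y,z\rangle$. In $H/\langle w^H\rangle$ the image of $w$ is trivial, so this quotient is generated by the image of $y$ and is a cyclic $q$-group, and it is also generated by the image of $z$ and is a cyclic $r$-group; as $q\ne r$ it is trivial, so $H=\langle w^H\rangle$. If $O_{p'}(H)\ne 1$, then $w\notin O_{p'}(H)$ and neither $y$ nor $z$ lies in $O_{p'}(H)$ (else that one would represent, together with $w=yz$, an element that is simultaneously a nontrivial $p$-element and a $p'$-element), so in $H/O_{p'}(H)$ all three elements stay nontrivial and induction applies, a contradiction. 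Hence $O_{p'}(H)=1$ and $Q:=O_p(H)=F^*(H)\ne 1$. If $w\notin Q$ the same kind of argument applies induction to $H/Q$ (here $y,z\notin Q$ as they are nontrivial $p'$-elements), so $w\in Q$; then $H=\langle w^H\rangle\le Q$ is a $p$-group, contradicting that $y$ is a nontrivial element of order prime to $p$. Granting the claim, if $xyz=1$ as in the statement with $G$ $p$-solvable, then $\langle x,y,z\rangle$ is $p$-solvable while $yz=x^{-1}$ is a nontrivial $p$-element, a contradiction.

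For the converse, suppose $G$ is not $p$-solvable. Since it suffices to find the triple inside a subgroup, I would pass to $H\le G$ of least order among the non-$p$-solvable subgroups; then every proper subgroup of $H$ is $p$-solvable. Let $K\trianglelefteq H$ be the largest normal $p$-solvable subgroup; then $H/K$ has trivial $p$-solvable radical, so (a standard generalized-Fitting argument) $F(H/K)=1$ and $F^*(H/K)=E(H/K)$ is a direct product of nonabelian simple groups, each of order divisible by $p$. Since $E(H/K)$ is not $p$-solvable it cannot be a proper subgroup of $H/K$, and if it had more than one component then a single component would be a proper normal non-$p$-solvable subgroup; hence $H/K=S$ is a nonabelian simple group with $p\mid|S|$, and $H$ is an extension of the $p$-solvable group $K$ by $S$.

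The crux --- and the place where the classification enters --- is the statement that every nonabelian simple group $S$ with $p\mid|S|$ contains nontrivial elements $\bar x,\bar y,\bar z$ of orders a power of $p$, of $q$, and of $r$ respectively, for some distinct primes $q,r\ne p$ dividing $|S|$ (such primes exist by Burnside's $p^aq^b$ theorem), with $\bar x\bar y\bar z=1$ --- ideally in a form robust enough (say with $\langle\bar x,\bar y\rangle=S$ and suitable rationality of the classes involved) to survive a lift. For alternating groups this is a hands-on computation with permutations; for groups of Lie type it follows from the known theory of products of semisimple and unipotent conjugacy classes together with structure-constant estimates (equivalently, from triangle-type generation results), with the groups of bounded order, the Suzuki and Ree families, and the sporadic groups handled by inspection. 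One then lifts the relation from $S=H/K$ to $H$ by a coprime-action/Frattini argument: strip off $O_{p'}(K)$, deal with the residual $p$-part of $K$, lift $\bar y$ and $\bar z$ to a $q$-element and an $r$-element, and adjust them within characteristic subgroups of $K$ so that the third factor $(yz)^{-1}$ comes out a $p$-element. I expect the main obstacle to be precisely this simple-group input, together with making the lifting step unconditional in the presence of the (not necessarily central) radical $K$; the reductions above are routine.
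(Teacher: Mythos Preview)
The paper gives no proof here: it simply records that the statement follows from Theorems~3.4 and~1.4 of Guralnick--Tiep \cite{gt}. So your proposal is really to be measured against those cited results rather than against anything argued in the present paper.

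Your elementary direction is correct and fully self-contained, which is more than the paper offers. The induction via $H=\langle w^H\rangle$, followed by peeling off $O_{p'}(H)$ and then $O_p(H)=F^*(H)$, works as written. (One small wording issue: the parenthetical justifying $y\notin O_{p'}(H)$ is garbled. The clean reason is that if $y\in O_{p'}(H)$ then in $H/O_{p'}(H)$ one has $\bar w=\bar z$, simultaneously a $p$-element and an $r$-element, hence trivial, forcing the nontrivial $p$-element $w$ into the $p'$-group $O_{p'}(H)$.)

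For the converse, your reduction to $H/K\cong S$ simple with $p\mid|S|$ (with $K$ the $p$-solvable radical of a minimal non-$p$-solvable subgroup $H\le G$) is correct, and you rightly isolate the two remaining ingredients: the existence of a $(p,q,r)$-triple in every finite simple group of order divisible by $p$, and the lift of such a triple through the $p$-solvable kernel $K$. These are precisely what the theorems of \cite{gt} supply, so your outline rediscovers the architecture of that paper. One caution: your phrase ``strip off $O_{p'}(K)$'' is not a legal move as stated, since the triple must be produced in $H$ itself, and lifting a $(p,q,r)$-triple through a normal $p'$-subgroup $N$ is not automatic (the $r'$-part of $(xy)^{-1}$ lies in $N$ but need not be trivial). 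The lifting in \cite{gt} goes through Frattini covers and is more delicate than your sketch indicates. You already flag this as ``the main obstacle'', so the proposal is best read as a correct outline that defers its two hardest steps to \cite{gt} --- which is, in the end, exactly what the paper itself does.
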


\begin{proof}
This follows from Theorems 3.4 and 1.4 of \cite{gt}.
\end{proof}

We note that the case $p=2$ depends on the classification of minimal simple groups, but when $p$ is odd we need the full classification of finite simple groups.

Now, we are ready to prove a strong form of Theorem B.

\begin{thm}
\label{st}
Let $G$ be a finite group and let $p$ be a prime number. If $x^Gy^G=(xy)^G$ for every $p$-element $x$ and every $p'$-element  of prime power order $y$, then $G$ is $p$-solvable. 
\end{thm}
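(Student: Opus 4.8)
The plan is to argue by contradiction, using Theorem~\ref{gt}. Suppose $G$ is not $p$-solvable. Then Theorem~\ref{gt} provides primes $q\neq r$, both different from $p$, and nontrivial elements $x,y,z\in G$ with $x$ a $p$-element, $y$ a $q$-element, $z$ an $r$-element, and $xyz=1$. Since $y$ is a $p'$-element of prime power order, the hypothesis applied to the pair $(x,y)$ gives $x^Gy^G=(xy)^G$, and as $xy=z^{-1}$ this says $x^Gy^G=(z^{-1})^G$. In particular every product $x^gy^h$ with $g,h\in G$ is conjugate to $z^{-1}$, hence is an $r$-element. In the same way $z$ is a $p'$-element of prime power order and $zx=y^{-1}$; since products of conjugacy classes commute, the hypothesis applied to $(x,z)$ gives $x^Gz^G=(y^{-1})^G$, so every product $x^gz^h$ is a $q$-element.

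Next I would derive a contradiction by producing a product of the ``wrong'' order. The mechanism is this: suppose one can find a conjugate $x'$ of $x$ and a conjugate $y'$ of $y$ lying in a common $\{p,q\}$-subgroup $H$ of $G$. Then $H$ is solvable and every element of $H$ has order divisible only by $p$ and $q$; in particular $x'y'\in H$ has order a $\{p,q\}$-number. But $x'y'\in x^Gy^G=(z^{-1})^G$, so $x'y'$ is an $r$-element, and since $r\notin\{p,q\}$ we get $x'y'=1$, i.e.\ $y'=(x')^{-1}$ --- impossible, as $x'$ is a nontrivial $p$-element and $y'$ a nontrivial $q$-element with $p\neq q$. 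Symmetrically, a conjugate of $x$ and a conjugate of $z$ inside a common $\{p,r\}$-subgroup would contradict $x^Gz^G=(y^{-1})^G$. Thus it is enough to produce such a pair for at least one of the primes $q,r$: for instance, starting from $Q\in\Syl_q(G)$ containing a conjugate $y'$ of $y$, it suffices to find a nontrivial $p$-element of $N_G(Q)$ conjugate to $x$, since then $\langle x',Q\rangle$ is a $\{p,q\}$-group of the required kind.

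The hard part --- and where the strength of Theorem~\ref{gt}, and so ultimately the classification, is genuinely used --- is showing that the triple $(x,y,z)$ can always be chosen so that this configuration is available. I would handle this by reducing to a minimal counterexample $G$. The hypothesis passes to quotients: a $p$-element and a prime-power $p'$-element of $G/N$ lift, via the $p$-part and the $q$-part of arbitrary preimages, to elements of the same type in $G$, and a single conjugacy class of $G$ maps onto a single class of $G/N$. Since $p$-solvability is closed under subgroups and extensions, a minimal counterexample has a unique minimal normal subgroup $M$, which must then be a direct product of copies of a nonabelian simple group $S$ with $p\mid|S|$ (otherwise $M$ would be a $p$- or $p'$-group and $G$ would be $p$-solvable). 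Applying Theorem~\ref{gt} to the non-$p$-solvable group $S$ puts $x,y,z$ inside one factor $S$, and one is reduced to ruling out, inside the simple group $S$, that all products $a^Sb^S$ of the class of a $p$-element by the class of a prime-power $p'$-element consist of elements of a single order --- a statement of Arad--Herzog type that I would extract from the Guralnick--Tiep results underlying Theorem~\ref{gt}. I expect this last point to be the main obstacle.
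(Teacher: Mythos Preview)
Your reduction is right and matches the paper: pass to a minimal counterexample, note that the hypothesis descends to quotients, conclude there is a unique minimal normal subgroup $N=S_1\times\cdots\times S_t$ with each $S_i$ nonabelian simple of order divisible by $p$, and apply Theorem~\ref{gt} inside $S_1$ to produce a triple $xyz=1$ with $x$ a nontrivial $p$-element, $y$ a $q$-element, $z$ an $r$-element.

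The gap is in your endgame. You try to manufacture a contradiction by placing a conjugate of $x$ and a conjugate of $y$ inside a common $\{p,q\}$-subgroup, and you acknowledge that arranging this (or, alternatively, invoking an Arad--Herzog-type statement in $S$) is ``the main obstacle''. The paper does not go this route at all, and nothing in Theorem~\ref{gt} by itself hands you such a configuration. The missing idea is to use characters via Lemma~\ref{ch}. From the hypothesis applied to $(x,y)$ and to $(x^{-1},z^{-1})$ one gets, for every $\chi\in\Irr(G)$,
\[
\chi(1)\chi(z^{-1})=\chi(x)\chi(y),\qquad \chi(1)\chi(y)=\overline{\chi(x)}\,\overline{\chi(z)}.
\]
Taking absolute values and multiplying shows $\chi(y)\neq 0\iff\chi(z)\neq 0$, and in that case $|\chi(x)|=\chi(1)$, so $x\in Z(\chi)$. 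Since the normal closure of $x$ is $N$, this forces $N\le Z(\chi)$ and hence $N\le\Ker\chi$ (as $N$ has no abelian quotient). Thus every $\chi$ with $\chi(y)\neq 0$ is a character of $G/N$, and column orthogonality gives
\[
|C_G(y)|=\sum_{\chi\in\Irr(G)}|\chi(y)|^2=\sum_{\chi\in\Irr(G/N)}|\chi(1)|^2=|G/N|,
\]
so $y^G=N$, impossible since $1\in N$. This character-theoretic step replaces your unfinished local analysis entirely; no Arad--Herzog input is needed.
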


\begin{proof}
Let $G$ be a minimal counterexample. Since the hypothesis is inherited by quotients, any minimal normal subgroup of $G$ is a direct product of nonabelian simple groups of order divisible by $p$. If $N=S_1\times\cdots\times S_t$ is a minimal normal subgroup then, by the minimality of $G$, $G/N$ is $p$-solvable, and we deduce that $N$ is the unique minimal normal subgroup of $G$. 

By Theorem \ref{gt}, there exist primes $q\neq r$ different from $p$ and there exist a nontrivial $p$-element $x\in S_1$, a nontrivial $q$-element $y\in S_1$ and a nontrivial $r$-element $z\in S_1$ such that $xyz=1$. By Lemma \ref{ch}, we have that 
$$
\chi(x)\chi(y)=\chi(1)\chi(xy)=\chi(1)\chi(z^{-1})
$$
and
$$
\overline{\chi(x})\overline{\chi(z)}=\chi(x^{-1})\chi(z^{-1})=\chi(1)\chi(x^{-1}z^{-1})=\chi(1)\chi(y)
$$
for every $\chi\in\Irr(G)$. We deduce that $\chi(y)\neq0$ if and only if $\chi(z)\neq 0$ and in those cases
$|\chi(x)|=\chi(1)$ whence $x\in Z(\chi)$. Since the normal closure of $x$ is $N$, we deduce that $N\leq Z(\chi)$. Since $Z(\chi)/\Ker(\chi)$ is abelian, it follows that $N\leq \Ker\chi$ so $\chi$ is a character of $G/N$. It follows that 
$$
|C_G(y)|=\sum_{\chi\in\Irr(G)}|\chi(y)|^2=\sum_{\chi\in\Irr(G/N)}|\chi(y)|^2=\sum_{\chi\in\Irr(G/N)}|\chi(1)^2|=|G/N|
$$
so $y^G=N$ which is a contradiction (since $1\in N$).
\end{proof}

We observe that Theorem B follows immediately from the case $p=2$ of Theorem \ref{st}




\section{Characters}




We start working toward a proof of a strong form of Theorem C. Part (iii) of the following lemma will be the key of our proof.

\begin{lem}
\label{key}
Let $G$ be a finite group and let $A$ and $B$ be nontrivial conjugacy classes.  Assume that  $\chi(xy)=\chi(x)\chi(y)$ for every  $x\in A$, $y\in B$ and $\chi\in\Irr(G)$. Then
\begin{enumerate}
\item
$(xy)^G=AB$.
\item
$\chi(xy)=0$ for every nonlinear $\chi\in\Irr(G)$. In particular, $\chi(x)=0$ or $\chi(y)=0$.
\item
$(xy)^G=xyG'$.
\end{enumerate}
\end{lem}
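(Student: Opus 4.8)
The plan is to prove the three statements in order, with (ii) doing the real work for (iii).

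For (i), note first that the product set $AB=\{ab : a\in A,\ b\in B\}$ is invariant under conjugation, hence is a union of conjugacy classes. Since every $\chi\in\Irr(G)$ is constant on $A$ and on $B$, the hypothesis forces $\chi(ab)=\chi(a)\chi(b)=\chi(x)\chi(y)$ for \emph{all} $a\in A$, $b\in B$; that is, every element of $AB$ has the same value under each irreducible character as $xy$ does. As irreducible characters separate conjugacy classes and $xy\in AB$, this gives $AB=(xy)^G$.

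For (ii), I would exploit that the class sum $\widehat B=\sum_{b\in B}b$ is central in $\mathbb{C}G$, so if $\rho$ is a representation affording $\chi$ then $\rho(\widehat B)$ equals the scalar $|B|\chi(y)/\chi(1)$ times the identity. Taking traces in $\rho(x)\rho(\widehat B)=\frac{|B|\chi(y)}{\chi(1)}\rho(x)$ yields the identity
\[
\sum_{b\in B}\chi(xb)=\frac{|B|\chi(y)}{\chi(1)}\,\chi(x).
\]
On the other hand the hypothesis makes every summand on the left equal to $\chi(x)\chi(y)$ (since $\chi(b)=\chi(y)$ for $b\in B$), so $|B|\chi(x)\chi(y)=\frac{|B|\chi(y)}{\chi(1)}\chi(x)$, i.e.\ $(\chi(1)-1)\chi(x)\chi(y)=0$. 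When $\chi$ is nonlinear, $\chi(1)>1$, so $\chi(xy)=\chi(x)\chi(y)=0$, and in particular $\chi(x)=0$ or $\chi(y)=0$.

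For (iii), the inclusion $(xy)^G\subseteq xyG'$ needs no hypothesis: conjugate elements are congruent modulo $G'$, so every $a\in A$, $b\in B$ satisfies $ab\equiv xy\pmod{G'}$, and by (i) this says $(xy)^G\subseteq xyG'$. For the reverse inclusion it is enough to compare orders: by column orthogonality $|C_G(xy)|=\sum_{\chi\in\Irr(G)}|\chi(xy)|^2$, and by (ii) only the linear characters contribute, each with $|\chi(xy)|^2=1$; hence $|C_G(xy)|=|G:G'|$ and $|(xy)^G|=|G'|=|xyG'|$. Combined with the inclusion, $(xy)^G=xyG'$. The computations are routine; the one genuine idea is the central class-sum/trace identity in (ii), which both forces nonlinear characters to vanish on $xy$ and, via column orthogonality, pins down $|C_G(xy)|$ for (iii). (The degenerate case $xy=1$ causes no trouble: then the hypothesis can hold only when $G$ is abelian — otherwise a nonlinear $\chi$ would give $\chi(1)=|\chi(x)|^2=0$ — and all three assertions are trivially true with $G'=1$.)
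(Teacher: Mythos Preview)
Your proof is correct and follows the same path as the paper's: (i) via equality of all character values on $AB$, (ii) by combining the hypothesis with the identity $\chi(1)\chi(xy)=\chi(x)\chi(y)$, and (iii) via column orthogonality to count $|C_G(xy)|$. The only difference is cosmetic: for (ii) the paper invokes Lemma~\ref{ch} (whose ``only if'' direction, quoted from \cite{gmt}, is exactly the class-sum/trace identity you wrote out), whereas you derive that identity from scratch; the arguments are the same underneath.
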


\begin{proof}
In order to obtain (i) it is enough to argue as in the proof of Lemma \ref{ch}. By Lemma \ref{ch}, (i)  implies that $\chi(1)\chi(xy)=\chi(x)\chi(y)$. Since $\chi(xy)=\chi(x)\chi(y)$ by hypothesis, we deduce that $\chi(xy)=0$ if $\chi\in\Irr(G)$ is nonlinear and hence we have (ii). This implies that 
$$
|C_G(xy)|=\sum_{\chi\in\Irr(G)}|\chi(xy)|^2=|G:G'|,
$$
so $(xy)^G=xyG'$. This completes the proof.
\end{proof}

Notice that Lemma \ref{key} (iii) implies that $G$ is solvable, by Theorem 4.3 of \cite{lad} or Theorem B (a) of \cite{gn}. The proof of these results depends on the classification of finite simple groups, but we will not use this in the proof of Theorem \ref{stchar}. It is worth pointing out that the fact that a group $G$ with a conjugacy class of size $|G'|$ is the dual for conjugacy classes of the Howlett-Isaacs theorem on groups of central type (see \cite{hi}).

\begin{thm}
\label{stchar}
Let $G$ be a finite group and let $p$ be a prime. The property $\chi(xy)=\chi(x)\chi(y)$ for every nonidentity  $p$-element $x\in G$, every nonidentity $p'$-element of prime power order $y\in G$  and every $\chi\in\Irr(G)$ holds if and only if $G$ is either a $p$-group,  a $p'$-group,  an abelian group or $G$ has the following structure:
\begin{enumerate}
\item
$G=AH$ where $A>1$ is  cyclic and $H$ is either a normal Sylow $p$-subgroup of $G$ or a normal Hall $p'$-subgroup of $G$.
\item
If $C=C_H(A)$ and $D=[H,A]$, then $H=CD$ with $C$ abelian, $D\triangleleft G$ and $C\cap D=1$. 
\item
The action of $A$ on $D$ is  Frobenius.
\end{enumerate}
\end{thm}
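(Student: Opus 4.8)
The plan is to prove both implications by elementary character theory, with Lemma~\ref{key} as the main tool.

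\emph{Sufficiency.} If $G$ is a $p$-group or a $p'$-group the hypothesis is vacuous, and if $G$ is abelian every $\chi$ is linear, so suppose $G$ has the structure (i)--(iii); say $H$ is a normal Sylow $p$-subgroup (the normal Hall $p'$ case is obtained by interchanging $p$ and $p'$ throughout). First I would note that $[H,A]=D$ and that $A$ centralises $H/D\cong C$, so $G/D\cong A\times C$ is abelian; hence $G'=D$ and the linear characters of $G$ are precisely the characters of $G/D$, for which the identity is immediate. Given a nonlinear $\chi$, a nontrivial $p$-element $x$ and a nontrivial prime-power $p'$-element $y$, I would use that $x\in H$ and that $y$ is conjugate into $A$ to reduce, after conjugating the triple, to $x\in H\setminus 1$ and $y\in A\setminus 1$. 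A short computation with the Frobenius action then gives $C_H(y)=C$ and $C_G(y)=C\times A$, of order $|G:D|=|G:G'|$; by column orthogonality every nonlinear character vanishes at $y$, so $\chi(x)\chi(y)=0$. Moreover the $p'$-part of $xy$ maps to $y$ under $G\to G/H\cong A$, so it is conjugate to $y$ and $|C_G(xy)|\le|C_G(y)|=|G:G'|$; equality follows, so $\chi(xy)=0$ as well, and $\chi(xy)=0=\chi(x)\chi(y)$.

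\emph{Necessity.} I may assume $G$ is neither a $p$-group, nor a $p'$-group, nor abelian, so $G'\neq 1$, $p\mid|G|$, and some prime $q\neq p$ divides $|G|$. For a nontrivial $p$-element $x_0$ and a nontrivial prime-power $p'$-element $y_0$, Lemma~\ref{key} applied to the classes $x_0^G$ and $y_0^G$ gives $\chi(x_0y_0)=0$ for every nonlinear $\chi$, and $\chi(x_0)\chi(y_0)=0$; letting $x_0,y_0$ vary, every nonlinear $\chi$ vanishes on all nontrivial $p$-elements or on all nontrivial prime-power $p'$-elements. Combining $\chi(x_0y_0)=0$ for nonlinear $\chi$ with $\sum_\chi\chi(1)\chi(x_0y_0)=0$ yields $\sum_{\lambda\in\Irr(G/G')}\lambda(x_0y_0)=0$, i.e.\ $x_0y_0\notin G'$ for every such product. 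Were $|G'|$ divisible both by $p$ and by some $q\neq p$, then $G'$ would contain a nontrivial $p$-element and a nontrivial $q$-element, whose product lies in $G'$ --- a contradiction. Hence $G'$ is a $p$-group or a $p'$-group.

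\emph{Extracting the structure, and the hard part.} Suppose $G'$ is a $p'$-group (the $p$-group case is symmetric and gives the variant with $H$ a normal Sylow $p$-subgroup). Since $G/G'$ is abelian, its Hall $p'$-subgroup pulls back to a normal Hall $p'$-subgroup $H\supseteq G'$ with $G/H$ an abelian $p$-group, and $G=H\rtimes P$ for $P\in\Syl_p(G)$. I would first rule out nonlinear $\chi$ of the second type: such $\chi$ vanishes on $Q\setminus 1$ for each Sylow $q$-subgroup $Q$ ($q\neq p$), hence (by the standard fact that a character vanishing on $Q\setminus 1$ vanishes on all $q$-singular elements) on all $q$-singular elements, hence on $H\setminus 1$; then $H\le\Ker\chi$ and $\chi(1)\mid|P|$, while $|Q|\mid\chi(1)$ for all $q\neq p$ forces $|G|_{p'}\mid\chi(1)$, impossible since $G$ is not a $p$-group. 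Thus every nonlinear $\chi$ vanishes on all nontrivial $p$-elements, so $|C_G(x)|=|G:G'|$ and $x^G=xG'$ for each nontrivial $p$-element $x$. Taking $x\in P\setminus 1$ and using that $P$ is abelian, I would derive $C_G(x)=C_H(x)P$, $|C_H(x)|=|H:G'|$, $x^H=xG'$ and hence $[P,H]=G'$; coprime action gives $H=C_H(P)[H,P]=C_H(P)G'$, and comparing orders forces $C_H(P)\cap G'=1$ and $C_H(x)=C_H(P)$, so that $C_{G'}(x)=1$ for every $x\in P\setminus 1$. Therefore the action of $P$ on $G'\neq 1$ is Frobenius, so $P$ is an abelian Frobenius complement and hence cyclic, while $C_H(P)$ embeds into the abelian group $G/G'$ and hence is abelian; setting $A:=P$, $C:=C_H(P)$ and $D:=G'=[H,P]$ gives exactly (i)--(iii) with $H$ the normal Hall $p'$-subgroup. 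The step I expect to be the real obstacle is this last one --- converting the lone numerical equality $|C_G(x)|=|G:G'|$, valid for all nontrivial $p$-elements $x$, into the full internal decomposition (in particular obtaining $C\cap D=1$ and the Frobenius action), and then invoking the classical but non-elementary fact that an abelian Frobenius complement is cyclic; on the sufficiency side the only non-formal ingredient is the observation that the prime-power parts of $xy$ are conjugate to $x$ and to $y$.
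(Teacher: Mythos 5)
Your proof is correct, and its backbone coincides with the paper's: Lemma~\ref{key} yields $(xy)^G=xyG'$, this forces $G'$ to be a $p$-group or a $p'$-group, and a coprime-action decomposition together with centralizer counting produces the Frobenius structure. The execution differs in two places. For necessity, the paper obtains $x^G=xG'$ for every nontrivial $p$-element $x$ in one line, by applying Lemma~\ref{key}(iii) to $x$ and a nontrivial prime-power-order $g\in G'$, so that $(xg)^G=xgG'=xG'\ni x$; you instead first prove that every nonlinear character vanishes on all nontrivial $p$-elements, which routes you through the vanishing dichotomy and the $q$-defect-zero theorem (a character whose degree is divisible by $|G|_q$ vanishes on $q$-singular elements). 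That detour is valid --- and your claim that $H\le\Ker\chi$ when $\chi$ vanishes on $H\setminus 1$ does follow from Clifford's theorem, which in fact gives an immediate contradiction --- but it invokes a considerably heavier tool than the paper needs. In compensation, your structure extraction is more careful: you actually derive $[H,P]=G'$ and $C_H(P)\cap G'=1$ from the order counts, facts the paper asserts without comment. For sufficiency, the paper shows every nonlinear character is induced from $H$ (the Frobenius action forces $I_G(\alpha)\le H$ for nonprincipal $\alpha\in\Irr(D)$) and hence vanishes off $H$, whereas your route via $|C_G(y)|=|C_G(xy)|=|G:G'|$ and the second orthogonality relation is different but equally elementary. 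One small repair: the two cases are not literally symmetric, since the hypothesis restricts only $y$ to prime power order; when $G'$ is a $p$-group you should pass to a nontrivial prime-power-order power of $a\in A$ before concluding $C_{G'}(a)=1$ for all $a\neq 1$. Your appeal to the classical fact that an abelian Frobenius complement is cyclic is fine and classification-free, consistent with the paper's remark that Theorem~C avoids the classification.
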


\begin{proof}
Assume that $\chi(xy)=\chi(x)\chi(y)$ for every nonidentity  $p$-element $x\in G$, every nonidentity $p'$-element of prime power order $y\in G$  and every $\chi\in\Irr(G)$.
We may assume that $p$ divides $|G|$ and that $G$ is not a $p$-group. Also, we may assume that $G$ is not abelian.
By Lemma \ref{key}, $(xy)^G=xyG'$ for every nonidentity $p$-element $x\in G$ and every nonidentity $p'$-element of prime power order $y$. 

We claim that $G'$ is either a $p$-group or a $p'$-group. By way of contradiction, assume that there exists a nonidentity $p$-element $x'\in G'$ and a nonidentity $q$-element $y'\in G'$, for some prime $q\neq p$. Then $(x'y')^G=x'y'G'=G'$ and this is a contradiction (since $1\in G'$). 

Assume first that $G'$ is a $p'$-group. Then $G$ has a normal Hall $p'$-subgroup $H$, $G'\leq H$ and $G=AH$ where $A$ is a Sylow $p$-subgroup of $G$. By coprime action, $H=CD$ where $C=C_H(A)$ and $D=[H,A]=G'$. Furthermore, $A$ is abelian, $D\triangleleft G$ and $C\cap D=1$. Let $1\neq a\in A$ and $g\in G'$ be a nonidentity prime power order element. Then $(ag)^G=agG'=aG'$ is a conjugacy class so $a^G=aG'$. This implies that $|C_G(a)|=|G:G'|=|AC|$. Since $AC\leq C_G(a)$, we deduce that $C_G(a)=AC$ and $C_D(a)=1$, so the action of $A$ on $D$ is Frobenius. In particular, since $A$ is abelian, it is cyclic. 
We are done in this case.

Finally, we may assume that $G'$ is a $p$-group. This case can be handled in the same way. The only difference is that $H$ will be a Sylow $p$-subgroup and $A$ a Hall $p'$-subgroup.

Conversely, we want to see that if a group satisfies properties (i)-(iii) then $\chi(xy)=\chi(x)\chi(y)$ for every nonidentity  $p$-element $x\in G$, every nonidentity $p'$-element of prime power order $y\in G$  and every $\chi\in\Irr(G)$. In order to see this, it suffices to prove that $\chi(g)=0$ for any $g\in G-H$ and any nonlinear $\chi\in\Irr(G)$. 

Since $A$ and $C$ are abelian and commute, $G/D$ is abelian. Therefore, all the nonlinear characters of $G$ lie over nonprincipal characters of $D$. Let $\alpha\in\Irr(D)$ be such a nonprincipal character. Since the action of $A$ on $D$ is Frobenius, $I_G(\alpha)\leq H$ (by Theorem 6.34 of \cite{isa}, for instance). It follows from Clifford's correspondence, that every $\chi\in\Irr(G)$ that lies over $\alpha$ is induced from some character of $H$. Therefore, $\chi$ vanishes on $G-H$, as desired.
\end{proof}

\section{Further results and questions}

There are a number of related questions that would be interesting to solve. The first  of them is immediately motivated by the results in this paper:

\begin{que} 
Which are the (solvable) finite groups that satisfy the hypothesis of Theorem B?
\end{que}

According to some GAP computations with the SmallGroups library (we thank A. S\'aez for doing this program) it could be true that $G/F(G)$ is metabelian. However, it does not seem easy to obtain a classification of these groups. 

We have seen in theorems \ref{a}, \ref{st} and \ref{stchar} that we can weaken the hypothesis on $x$ and $y$ that appears in the statement of the results in the introduction by assuming that $x$ is a $p$-element, for some fixed prime $p$. It is interesting to consider what happens if we also assume that $y$ is a $q$-element, for some fixed prime $q\neq p$. For instance, we have the following.

\begin{thm}
Let $p\neq q$ two odd primes and let $G$ be a finite group. Let $\pi=\{2,p,q\}$. Assume that $(xy)^G=x^Gy^G$ for every $x, y\in G$ $\pi$-elements of prime power order with $(o(x),o(y))=1$. Then $G$ does not have any composition factors of order divisible by $pq$.
\end{thm}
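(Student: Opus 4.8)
The plan is to run the minimal‑counterexample argument from the proof of Theorem~\ref{st}, with the three primes $2,p,q$ playing the role that $\{p,q,r\}$ played there. First I would note that the hypothesis passes to quotients: if $N\triangleleft G$ and $\bar u\in G/N$ is a nontrivial $\ell$-element with $\ell\in\pi$, then writing any preimage of $\bar u$ as the product of its commuting $\ell$-part and $\ell'$-part and using that $\bar u$ is an $\ell$-element shows $\bar u$ lifts to an $\ell$-element of $G$; since the image of a product of conjugacy classes is the corresponding product in $G/N$, the group $G/N$ inherits the hypothesis. So let $G$ be a minimal counterexample and $N$ a minimal normal subgroup of $G$. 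By minimality $G/N$ has no composition factor of order divisible by $pq$; as the composition factors of $G$ are those of $N$ together with those of $G/N$, and an abelian minimal normal subgroup has prime‑power order, we get $N=S_1\times\cdots\times S_t$ with each $S_i\cong S$ a nonabelian simple group satisfying $pq\mid|S|$ — and hence, by the Feit--Thompson theorem, $2\mid |S|$, so $2pq\mid|S|$. Since $N$ is a minimal normal subgroup, the normal closure in $G$ of any nontrivial element of $S_1$ is $N$.

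The heart of the matter is then to find a nontrivial $2$-element $x$, a nontrivial $p$-element $y$ and a nontrivial $q$-element $z$, all lying in $S_1$, with $xyz=1$. (Because $\pi=\{2,p,q\}$ has exactly three members and the three orders must be pairwise coprime, one is forced to use exactly one element for each of $2$, $p$, $q$; this is presumably why the conclusion is only ``no composition factor of order divisible by $pq$'' rather than full solvability.) Statements of this shape — for a nonabelian simple group $S$ and primes $\ell_1,\ell_2,\ell_3$ dividing $|S|$, solvability of $g_1g_2g_3=1$ with each $g_i$ a nontrivial $\ell_i$-element — are exactly what is analysed in \cite{gt}, from which Theorem~\ref{gt} above is extracted, so I would invoke the appropriate statement from \cite{gt} for the triple $(2,p,q)$. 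Checking that there is no exceptional small simple group here — and this is presumably where one uses that $p$ and $q$ are \emph{odd}, and where the classification of finite simple groups enters — is the step I expect to be the main obstacle.

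Granting such $x,y,z\in S_1\subseteq G$, they are nontrivial $\pi$-elements of prime‑power order with pairwise coprime orders, so the hypothesis applies to the pairs $(x,y)$ and $(x^{-1},z^{-1})$. From $xyz=1$ we have $xy=z^{-1}$ and $x^{-1}z^{-1}=y$, so Lemma~\ref{ch} yields, for every $\chi\in\Irr(G)$,
$$
\chi(1)\chi(z^{-1})=\chi(x)\chi(y),\qquad \chi(1)\chi(y)=\chi(x^{-1})\chi(z^{-1}).
$$
Exactly as in the proof of Theorem~\ref{st}: the first identity forces $\chi(z)=0$ whenever $\chi(y)=0$, the second forces $\chi(y)=0$ whenever $\chi(z)=0$, and when $\chi(y)\chi(z)\neq0$, taking absolute values in the two identities and multiplying gives $\chi(1)^2=|\chi(x)|^2$, so $x\in Z(\chi)$. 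Hence $N=\langle x^G\rangle\leq Z(\chi)$ for every $\chi$ with $\chi(y)\neq0$, and since $Z(\chi)/\Ker\chi$ is abelian while $N$ is perfect, $N\leq\Ker\chi$ for all such $\chi$; these are precisely the irreducible characters of $G/N$ (each of which is nonzero on $y\in N$). Therefore
$$
|C_G(y)|=\sum_{\chi\in\Irr(G)}|\chi(y)|^2=\sum_{\chi\in\Irr(G/N)}|\chi(y)|^2=\sum_{\chi\in\Irr(G/N)}\chi(1)^2=|G/N|,
$$
so $y^G=N$, contradicting $1\in N$. This completes the plan, modulo the input from \cite{gt}.
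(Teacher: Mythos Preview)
Your argument is correct and is exactly the approach the paper takes: the paper merely says the theorem is proved as Theorem~\ref{st}, replacing Theorem~\ref{gt} by Conjecture~6.2 of \cite{gt} (now proved in \cite{lee}). So the precise input you were seeking for the triple $(2,p,q)$ is that result of Lee, and your guess that the hypothesis $p,q$ odd is what makes that input go through is on target.
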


This result can be proved as Theorem \ref{st}  using Conjecture 6.2 of \cite{gt} (which has been proven in \cite{lee}) instead of Theorem \ref{gt}. It is likely that we do not need to assume that $2\in\pi$. This should be related to  the Arad-Herzog conjecture (see \cite{gmt}). 
Notice that this hypothesis does not imply that $G$ is either $p$-solvable or $q$-solvable: take $G=S_1\times S_2$ with $S_1$ simple of order divisible by $p$ but not by $q$ and $S_2$ simple of order divisible by $q$ but not by $p$.

By analogy with the multiplicative functions in analytic number theorem, we will say that a character $\chi$ of a finite group $G$ is a multiplicative if $\chi(xy)=\chi(x)\chi(y)$ for every nonidentity $x,y\in G$ with $(o(x),o(y))=1$. (Sometimes, the term multiplicative character is used for the linear characters, which is a more complete analogy, but this is not convenient for our purposes.) Multiplicative characters seem rare in nonnilpotent groups.

\begin{que}
Which are the nonnilpotent groups that possess a faithful (nonlinear) multiplicative character?
\end{que}

Gagola groups (i.e. groups with a character that vanishes on all but two conjugacy classes; see \cite{gag}) are examples of such groups. Notice that among them there are nonsolvable groups. We are not aware of any example of a nonnilpotent group with a faithful multiplicative character that does not vanish off of some normal $p$-subgroup for some prime $p$. In particular, we believe that such characters do not exist in nonabelian simple groups.

\end{document}